\newtheorem{theorem}{Theorem}[section]
\newtheorem{lemma}[theorem]{Lemma}
\newtheorem{proposition}[theorem]{Proposition}
\newtheorem{corollary}[theorem]{Corollary}
\newtheorem{definition}[theorem]{Definition}
\newcommand{\cP}{\mathcal{P}}
\newcommand{\powerset}{\cP}
\newcommand{\inl}{{\sf inl}}
\newcommand{\inr}{{\sf inr}}
\newcommand{\Ninfty}{{\mathbb{N}_\infty}}
\newcommand{\bbN}{\mathbb N}
\newcommand{\limplies}{\Rightarrow}
\newcommand{\eqdef}{\mathrel{:=}}
\newcommand{\Succ}{{\sf S}}
\newcommand{\Ninftys}{\underline{\Succ}}
\newcommand{\op}{\text{op}}
\newcommand{\finset}{{\sf Finset}}
\newcommand{\CB}{{\mathbb{CB}}}
\newcommand{\EM}{{\mathbb{EM}}}
\newcommand{\CBBB}{{\mathbb{CBBB}}}
\newcommand{\LPO}{{\mathbb{LPO}}}
\begin{document}

\title{Cantor-Bernstein implies Excluded Middle}

\author{
C\'ecilia Pradic~\thanks{Swansea University}
\and
Chad E. Brown\thanks{Czech Technical University in Prague}
}

\maketitle

\begin{abstract}
We prove in constructive logic that the statement of the Cantor-Bernstein theorem implies excluded middle.
This establishes that the Cantor-Bernstein theorem can only be proven assuming the full power of classical logic.
The key ingredient is a theorem of Mart\'{\i}n Escard\'{o} stating that quantification over a particular subset of the Cantor space $2^\bbN$, the so-called one-point compactification of $\bbN$, preserves decidable predicates.
\end{abstract}

The \emph{Cantor-Bernstein theorem} is an elementary statement $\CB$ of set theory: for any two sets $A$ and $B$, if there are injections $f : A \hookrightarrow B$ and $g : B \hookrightarrow A$, then there exists a bijection $h : A \xrightarrow{\sim} B$.
An interesting feature of this theorem is that it may be proven in ZF without assuming the axiom of choice.
However, this proof is non-constructive in the sense that it goes through classical logic; while the construction of the bijection $h$ is rather explicit, one needs to appeal to excluded middle to show that it is indeed a bijective function.

Models of constructive set theory invalidating $\CB$ are known, such as for instance, models based on Kleene realizability\footnote{For instance, consider the subset $H \subseteq \bbN$ corresponding to the halting problem. If $\CB$ held in the effective topos, we would have a recursive bijection $\bbN \xrightarrow{\sim} \{ 2n \mathrel{|} n \notin H \} \cup \{ 2n + 1 \mathrel{|} n \in \bbN \}$; since it is in particular surjective, we would be able to build a recursive enumeration of the complement of $H$, which is absurd.}.
However, this still left open the question of whether the full power of \emph{excluded middle} $(\EM)$ is really necessary to prove the theorem or if a
weaker classical principle would be enough.
The purpose of this note is to show that, indeed, full excluded middle is required.  

\begin{restatable}{maintheorem}{maincbxm} $(\CB \Rightarrow \EM)$ \label{thm:maincbxm}
  Over intuitionistic set theory, the Cantor-Bernstein theorem implies the principle of excluded middle.
\end{restatable}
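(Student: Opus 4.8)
The plan is to derive $\EM$ by showing that, assuming $\CB$, every proposition $P$ is decidable, i.e. $P \lor \neg P$ holds. Fix an arbitrary proposition $P$ and let $Q \eqdef \{\star \mid P\}$ be the associated subsingleton, which is inhabited precisely when $P$ holds. Realising the one-point compactification $\Ninfty$ as the set of decreasing binary sequences inside $\bbtwo^{\bbN}$, I would apply $\CB$ to the two sets $A \eqdef \Ninfty$ and $B \eqdef \Ninfty + Q$. There is an evident injection $A \subto B$ given by the left coprojection $\alpha \mapsto \inl(\alpha)$. For the reverse injection $B \subto A$, I would use the shift map $s : \Ninfty \to \Ninfty$ that prepends a $1$ to a sequence; it is injective and its image is exactly $\Ninfty \setminus \{\Ninftyz\}$, where $\Ninftyz$ denotes the sequence constantly equal to $0$. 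Mapping $\inl(\alpha) \mapsto s(\alpha)$ and $\inr(\star) \mapsto \Ninftyz$ then defines an injection $B \subto A$, and crucially this is well defined whether or not $P$ holds, since the $\inr$ clause is vacuous when $Q$ is empty.

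$\CB$ now yields a bijection $h : A \xrightarrow{\sim} B$. Because the coproduct $B = \Ninfty + Q$ carries a decidable tag distinguishing its two summands, the predicate $q : \Ninfty \to \bbtwo$ with $q(x) = 1$ exactly when $h(x)$ lies in the right-hand summand $Q$ is decidable. By surjectivity of $h$ there is some $x$ with $q(x) = 1$ if and only if the summand $Q$ is inhabited, that is, $(\exists x \in \Ninfty.\, q(x) = 1) \lequiv P$; moreover any genuine witness $x$ exhibits an element of $Q$ and hence establishes $P$ outright.

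Finally I would invoke Escard\'{o}'s theorem. Applied to the decidable predicate $q$, it supplies a point $x_0 \in \Ninfty$ such that $q(x_0) = 1$ holds iff $q(x) = 1$ for some $x \in \Ninfty$. Since $q(x_0) = 1$ is decidable and equivalent to $P$, this decides $P$, and as $P$ was arbitrary we obtain $\EM$. I expect the main obstacle to be exactly this last extraction: on its face the bijection only makes $P$ semidecidable, and the naive use of decidability of the quantifier $\forall x.\, \neg(q(x) = 1)$ yields merely $\neg P \lor \neg\neg P$ rather than $P \lor \neg P$. Overcoming this is what forces the use of Escard\'{o}'s result in its witness-producing (searchability) form, so that a positive decision for the existential genuinely returns an element of $\Ninfty$ from which $P$ itself — and not just $\neg\neg P$ — can be read off.
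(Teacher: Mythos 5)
Your argument is correct and is essentially the paper's own proof: the same pair of injections between $\Ninfty$ and $\Ninfty$ plus a subsingleton (the coprojection one way; the shift map $\Ninftys$ together with $\underline{0}$ the other), followed by applying Escard\'{o}'s omniscience/selection theorem for $\Ninfty$ to the decidable predicate that reads off the coproduct tag of the bijection, with the existential witness delivering $p$ outright and the universal case delivering $\neg p$. The only differences are cosmetic: which summand carries the subsingleton, and the fact that you inline what the paper isolates as Lemma~\ref{lem:surinh}.
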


The argument is a straightforward application of a key theorem of Mart\'{\i}n Escard\'{o}~\cite{Esc13} concerning 
the one-point compactification $\Ninfty$ of $\bbN$ and decidable predicates:
there exists a function
\[ \varepsilon \quad : \quad (\Ninfty \to 2) \quad \longrightarrow \quad \Ninfty\]
selecting a counter-witness for its input when possible. Formally speaking, it means that for any decidable predicate $P : \Ninfty \to 2$, we have the following equivalence\footnote{This should not be confused with Hilbert's $\varepsilon$ which dually selects \emph{witnesses} of existential statements.}.
\[ \big(\forall p \in \Ninfty. \; P(p) = 1\big)   \qquad \Longleftrightarrow \qquad P(\varepsilon(P)) = 1 \]
This result is rather striking as it means that there exists an infinite set\footnote{By which we systematically mean Dedekind-infinite here.} for which decidable predicates are stable under quantification, \emph{provably in constructive logic}. This is to be contrasted against the case of $\bbN$, which admits no recursive selection function.

Assuming the existence of \emph{any} infinite set equipped with a selection function, $\CB \Rightarrow \EM$ may be proven by very elementary means.
The reader familiar with~\cite{Esc13} may content themselves with the proofs of Lemma~\ref{lem:surinh} and Theorem~\ref{thm:maincbxm}. Section~\ref{sec:cbxm} as a whole gives a self-contained proof of $\CB \Rightarrow \EM$, integrating the necessary technical content from~\cite{Esc13} about $\Ninfty$.
For the more casual reader, we first give a preliminary example of an elementary set-theoretic statement implying excluded middle in Section~\ref{sec:elemex}, so as to illustrate how many similar statements can
be proven because of the comprehension axiom.
We then give an already-known proof~\cite{Banaschewski-1986}
 of a weaker variant of Theorem~\ref{thm:maincbxm} in Section~\ref{sec:cbbbem} for didactic purposes, before moving on to the proof of Theorem~\ref{thm:maincbxm}.

\subparagraph*{Foundational and notational preliminaries}
We work informally in an intuitionistic set theory such as IZF where we do not assume excluded middle upfront;
we essentially require that the usual axioms of Zermelo's set theory hold~\cite{myhill1973some}, including full
separation.
Among other things, this means that our universe of sets is closed under pairing, union, powerset
(which we write $\powerset(-)$) and set comprehension $\{ x \in A \mid \varphi \}$.
For sets $A$ and $B$, function spaces $B^A$ and disjoint unions $A + B$ are built as usual.
We write $\inl:A\to A+B$ and $\inr:B\to A+B$ for the usual injections into disjoint unions.
In particular, for every $x\in A+B$ there is either an $a\in A$
such that $x=\inl(a)$ or $b\in B$ such that $x=\inr(b)$.
We let $0$ be the empty set $\emptyset$, $1 = \{0\}$ and $2=\{0,1\} \cong 1 + 1$.

In set theory, propositions or truth values can be arranged as a set $\Omega = \powerset(1)$, which
is closed under all logical connective.
For a formal proposition $p \in \Omega$, we sometimes abbreviate ``$p = 1$'' by ``$p$'' when writing
formulas. \emph{Excluded middle} $\EM$ may then be formally written as
$\forall p \in \Omega. \; p \vee \neg p$\footnote{Note that the scheme $\forall x. \; \varphi(x) \vee \neg \varphi(x)$ for arbitrary $\varphi$ can be recovered by set comprehension.}.
Note that an equivalent formulation of $\EM$ in this setting is $\Omega \cong 2$,
which is \emph{not} the case in constructive settings.
In the sequel, although they are definitionally the same in set theory, we often make a notational distinction between propositions $p \in \Omega$ and subsets $A \subseteq 1$ for readability.

Finally, and crucially\footnote{Interestingly, one may easily construct models of finitary models satisfying $\CB$ but not $\EM$: take the internal logic of the topos $\finset^{\mathcal{C}^\op}$. If $\mathcal{C}$ is not a groupoid, $\EM$ is not satisfied, while $\CB$ always hold in the internal logic. This indicates that assuming the axiom of infinity is essential here.}, we assume the axiom of infinity and call $\bbN$ the set of natural numbers.

\subparagraph*{Related works}
We do not reprove $\EM \Rightarrow \CB$ here; while it is not necessary
to read this note, they motivate a strengthening of $\CB$ mentioned in Definition~\ref{def:cbbb}.
Any introduction to set theory should have a satisfactory proof; for reference, one may look at~\cite[Theorem 3.2]{jech}.
For a historical perspective, the reader may be interested in Hinkis' monograph~\cite{hinkis}.
The question of the non-constructiveness of $\CB$ from the categorical point of view was
studied by Banaschewski and Br\"ummer in~\cite{Banaschewski-1986} and mentioned in Johnstone's Elephant~\cite[Lemma D4.1.12, p.950]{Johnstone02}.

Since the first version of this note, Mart\'{i}n Escard\'{o} carried out further investigations
regarding Cantor-Bernstein and excluded middle in the context of type theory;
the Agda formalization~\cite{EscCBAgda} includes a wealth of further results in
addition to a generalization of $\CB$ to $\infty$-groupoids\cite{EscGrpd}.

\subparagraph*{Acknowledgements}
Thanks to Robert Passman who signaled us a misleading statment in the initial version of this document,
which misguidedly mentioned CZF (full separation is needed here if we want full
$\EM$) and to Jim Kingdon for reporting mistaken type annotations.
Let us also thank (belatedly) Andrej Bauer and Henryk Michalewski for pushing us to put this out there
and Mart\'{i}n Escard\'{o} for follow-up discussions. Also many thanks to them and the
broader community for their inspiring work!

\section{An elementary set-theoretic statement implying $\EM$}
\label{sec:elemex}
Let us start with an example of a statement involving functions, which could very well be given as a exercise in a first introduction to set theory.

\begin{proposition}
  \label{prop:injsur}
  Let $A$ and $B$ be sets and $f : A \to B$ an injective function. Suppose that $A$ is non-empty.
  If excluded middle holds,
  then there exists a surjection $g : B \to A$.
\end{proposition}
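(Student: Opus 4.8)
The plan is to invert $f$ on its image and then extend by a constant to the rest of $B$, using excluded middle precisely to decide membership in that image. First, since $A$ is non-empty, I would fix a base point $a_0 \in A$; this is what makes the would-be surjection total on all of $B$, including points outside the range of $f$. Next I would introduce the image $I = \{ b \in B \mid \exists a \in A.\ f(a) = b \} \subseteq B$ as a subset carved out by comprehension.

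I would then define $g : B \to A$ as the set of pairs
\[
  g = \{ (b,a) \in B \times A \mid (b \in I \wedge f(a) = b) \vee (b \notin I \wedge a = a_0) \},
\]
and argue that $g$ is a genuine function. Single-valuedness is the easy half: if $b \in I$, then any two candidate values $a, a'$ satisfy $f(a) = b = f(a')$, so injectivity of $f$ forces $a = a'$; and if $b \notin I$, both candidates equal $a_0$. Since the two cases are mutually exclusive, no $b$ can receive conflicting values.

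The crux is totality, and this is exactly where excluded middle enters. For an arbitrary $b \in B$ there is no constructive way to decide whether $b$ lies in $I$, since that is an unbounded existential over $A$; but under $\EM$ the disjunction $b \in I \vee b \notin I$ holds, and in either case the comprehension above supplies at least one pair $(b,a) \in g$. I expect this decision step to be the only real obstacle — everything else is bookkeeping, and it is the single place where the hypothesis $\EM$ is genuinely consumed. Finally, surjectivity is immediate: given any $a \in A$, the point $f(a)$ lies in $I$ and trivially satisfies $f(a) = f(a)$, so $(f(a), a) \in g$, i.e.\ $g(f(a)) = a$; hence every element of $A$ is attained.
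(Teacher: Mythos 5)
Your proposal is correct and follows exactly the same route as the paper's proof: use excluded middle to decide whether $b$ lies in the image of $f$, send it to its unique preimage (well-defined by injectivity) if so, and to a fixed default element of $A$ otherwise. The only difference is that you spell out the graph of $g$ as a set of pairs and verify totality and single-valuedness explicitly, which the paper leaves implicit.
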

\begin{proof}
Since $A$ is non-empty, one can pick a default element $d \in A$.
By excluded middle, we know for every $y\in B$ either there exists an $x\in A$ such that $f(x)=y$
or no such $x$ exists.
Hence we can define $g$ by cases as follows:
$$
g(y) := \left\{
\begin{array}{ll}
  x & {\mbox{ if }} f(x)=y \\
  d & {\mbox{ if no such }} x {\mbox{ exists}}
\end{array}
\right.
$$
Note $g$ is well defined since $f$ is injective,
and $g$ is clearly surjective.
\end{proof}

Notice that in this little proof, one needs to make a case analysis using excluded middle. As in the case of Cantor-Bernstein, one can ask if this is necessary. It is in our setting.

\begin{proposition}
  \label{prop:injsurxm}
  Suppose for all sets $A$ and $B$ there is a surjective function $g:B\to A$ whenever $A$ is nonempty and there is an injective function $f:A\to B$.
  Then excluded middle holds.
\end{proposition}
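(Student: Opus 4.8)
The plan is to derive full excluded middle by instantiating the hypothesis on cleverly chosen sets built from a subsingleton, exactly the pattern the introduction announces for the main theorem. Fix an arbitrary proposition $p \in \Omega$; the goal is to establish $p \vee \neg p$. First I would encode $p$ into a subset of the two-element set $2 = \{0,1\}$ by setting
\[ A_p \;\eqdef\; \{\, n \in 2 \mid n = 0 \vee p \,\}. \]
Then $0 \in A_p$ unconditionally, so $A_p$ is nonempty, whereas $1 \in A_p$ holds precisely when $p$ does. The inclusion $A_p \subto 2$ is injective, so the hypothesis applies with $A = A_p$ and $B = 2$, yielding a surjection $g : 2 \epito A_p$.

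Next I would exploit the fact that $2 \cong 1 + 1$ has decidable equality: for any $x \in 2$, one of $x = 0$ or $x = 1$ holds, and which one is decidable. Since $g(0)$ and $g(1)$ are elements of $A_p \subseteq 2$, the proposition
\[ Q \;\eqdef\; \big( g(0) = 1 \;\vee\; g(1) = 1 \big) \]
is therefore decidable outright. The crux is then to prove the equivalence $p \lequiv Q$. For the forward direction, if $p$ holds then $1 \in A_p$, so by surjectivity of $g$ there is some $b \in 2$ with $g(b) = 1$, whence $Q$. For the backward direction, if $Q$ holds then $1$ lies in the image of $g$, and since that image is contained in the codomain $A_p$ we obtain $1 \in A_p$, which by construction of $A_p$ forces $p$.

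Combining these, $p$ is equivalent to the decidable proposition $Q$, so $p \vee \neg p$ holds; as $p$ was arbitrary, this is exactly $\EM$. The one delicate point—the only place where constructivity could silently slip back in—is the appeal to decidability: it is essential that the ambient set be the genuinely finite $2$, on which membership and equality are decidable with no choice or classical input, rather than an infinite set. Everything else is bookkeeping: checking that $A_p$ is nonempty, that the inclusion is injective, and that ``surjective'' is read in its constructive sense (every element of $A_p$ is actually attained) so that the forward implication goes through.
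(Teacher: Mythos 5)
Your proof is correct and follows essentially the same route as the paper: the paper encodes $p$ as $A=\{0\mid p\}\subseteq 1$ and injects $A+1$ into $2$, which is isomorphic to your $A_p\subseteq 2$ with its inclusion, and the paper's final case analysis on $g(0),g(1)$ is exactly your observation that the finite search over $2$ is decidable and decides $p$. The only difference is presentational (an explicit decidable proposition $Q$ equivalent to $p$ versus a direct two-case argument), so nothing substantive separates the two.
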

\begin{proof}
  Let $p\in\Omega$ be given
  and consider $A \eqdef \{ 0 \mathrel{|} p \}$.
  There is clearly an injection $f:A+1\to 2$ given by $f(\inl(x)) = 0$ for $x\in A$ and $f(\inr(0)) = 1$.
  Note that $A + 1$ is non-empty, as $\inr(0) \in A + 1$.
  Applying our assumption, we obtain a surjection $g : 2 \to A + 1$.
  Note that $$\forall i \in 2. \; g(i) = \inl(0) \lor g(i) = \inr(0)$$ holds because of the universal property of disjoint unions.
  Hence we have two cases.
  \begin{itemize}
  \item Suppose $g(i) = \inl(0)$ for some $i\in 2$.
    In this case $0\in A$ and so $p$ holds.
  \item Suppose $g(0) = \inr(0)$ and $g(1) = \inr(0)$.
    We prove $\neg p$ holds. To this end, assume $p$ holds.
    Hence $0\in A$. Since $g$ is surjective there must be some $i\in 2$ such that $g(i)=\inl(0)$,
    contradicting our assumption.
  \end{itemize}
\end{proof}

Here, the strategy was fairly simple: take the $A \subseteq 1$ associated to the proposition, and try to make it fit in the hypothesis of the lemma
using disjoint unions and singletons. The situation in the proof can be visualized as follows:
$$
\vcenter{\xymatrix@R=7pt{ 1 \ar@{}[d]|+ \\ A}}
\left\{
\quad
\vcenter{
\xymatrix@R=5pt{
 \bullet \ar[r] & \bullet \\
\divideontimes \ar@{~>}[r] & \bullet \\
}}
\quad \right\} 2
\hspace{20pt}
\longmapsto
\hspace{20pt}
\vcenter{\xymatrix@R=5pt{
\bullet & \ar[l] \bullet \\
\divideontimes & \ar[l] \bullet \\
}}
\text{\quad or \quad}
\vcenter{\xymatrix@R=5pt{
\bullet & \ar[dl] \bullet \\
\divideontimes & \ar[ul] \bullet \\
}}
\text{\quad or \quad}
\vcenter{\xymatrix@R=5pt{
\bullet & \ar[l] \bullet \\
\divideontimes & \ar[ul] \bullet \\
}}
$$

\section{Reversing Cantor-Bernstein-Banaschewski-Br{\"{u}}mmer}
\label{sec:cbbbem}

One can try to adopt a similar strategy for proving that Cantor-Bernstein implies excluded middle.

Provided some $A \subseteq 1$, one can start building an injection $f : A \to 1$.
$$\xymatrix@R=5pt{\divideontimes \ar@{~>}[r] & \bullet}$$
However, we need to have an injection going back and we are unsure of the existence of an element in $A$. So let us consider the obvious injection $1 \to A + 1$.
$$\xymatrix@R=5pt{\divideontimes \ar@{~>}[r] & \ar[dl] \bullet \\
            \bullet & }$$
Again we need a new value to be the image under $f$ of this latest element, which leads us to consider $2$.
Since we still do not have two injections, one might be tempted to iterate this process.
$$\vcenter{\xymatrix@R=5pt{\divideontimes \ar@{~>}[r] & \ar[dl] \bullet \\
            \bullet \ar[r] & \bullet }}
\hspace{40pt}
\vcenter{\xymatrix@R=5pt{\divideontimes \ar@{~>}[r] & \ar[dl] \bullet \\
            \bullet \ar[r] & \ar[dl] \bullet \\
            \bullet &}}
\hspace{40pt}
\vcenter{\xymatrix@R=5pt{\divideontimes \ar@{~>}[r] & \ar[dl] \bullet \\
            \bullet \ar[r] & \ar[dl] \bullet \\
            \bullet \ar[r] & \ar[dl] \bullet \\
            \ar@{}[r]|{\vdots} & }}$$
This informal discussion suggests using $\bbN$ and the following injections.
$$
\vcenter{\xymatrix@R=30pt{ A \ar@{}[d]|+ \\ \bbN}}
\left\{ \quad \vcenter{\xymatrix@R=5pt{\divideontimes \ar@{~>}[r] & \ar[dl] \bullet \\
            \bullet \ar[r] & \ar[dl] \bullet \\
            \bullet \ar[r] & \ar[dl] \bullet \\
            \ar@{}[r]|{\vdots} & \\
            \ar@{}[r]|{\vdots} & \\
}} \quad \right\} \bbN
\hspace{40pt}
\begin{array}{lrclcllcl}
f : & \bbN &\longrightarrow& A + \bbN \\ 
    & n &\mapsto& \inr(n) \\ 
    &   &        &        \\ 

g : &A + \bbN &\longrightarrow& \bbN \\
&\inl(0) &\mapsto& 0 \\
&\inr(n) &\mapsto& n+1

\end{array}
$$

$\CB$ then provides a bijection $h : \bbN \to A + \bbN$. In fact, in elementary
proofs of the theorem this bijection can be seen as a perfect matching of the above
graph. Note however that the usual statement $\CB$ conceals this relationship between $f, g$
and $h$. Banaschewski and Br\"ummer~\cite{Banaschewski-1986} studied the corresponding
strengthened version of $\CB$, which we dub $\CBBB$, in a categorical setting and proved
that it implied excluded middle.

\begin{definition}\label{def:cbbb}
  We say {\emph{${\CBBB}$ holds}} if the following statement holds:
  given sets $A$ and $B$ and injections $f:A\to B$ and $g:B\to A$,
  there is a bijection $h:A\to B$
  such that for all $x\in A$ and $y\in B$,
  $f(x) = y$ or $x = g(y)$
  whenever $h(x) = y$.
\end{definition}

Let us remark that it is obvious that $\CBBB \Rightarrow \CB$.
Let us also stress that $\EM \Rightarrow \CBBB$ can be easily obtained
 by adapting elementary proofs of $\EM \Rightarrow \CB$.

\begin{theorem}[Proposition 4.1 in~\cite{Banaschewski-1986}]\label{thm:cbbbxm}
  If ${\CBBB}$ holds, then excluded middle holds.
\end{theorem}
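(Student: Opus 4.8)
The plan is to reuse the template of Proposition~\ref{prop:injsurxm}: given an arbitrary $p \in \Omega$, encode it as a subsingleton, feed the two injections drawn above into $\CBBB$, and decide $p$ by inspecting a single value of the resulting bijection.

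Concretely, I would fix $p \in \Omega$, set $A \eqdef \{0 \mathrel{|} p\} \subseteq 1$, and take $f : \bbN \to A + \bbN$ with $f(n) = \inr(n)$ together with $g : A + \bbN \to \bbN$ with $g(\inl(0)) = 0$ and $g(\inr(n)) = n+1$. Both are injective (that $g$ is well defined and injective uses $A \subseteq 1$, so that $\inl(0)$ is the only possible $A$-element), so $\CBBB$ supplies a bijection $h : \bbN \to A + \bbN$ satisfying the matching clause: $h(x) = y$ implies $f(x) = y$ or $x = g(y)$.

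The decisive observation is that $0$ is the only possible preimage of $\inl(0)$: if $h(x) = \inl(0)$, then since $f(x) = \inr(x) \neq \inl(0)$ the clause forces $x = g(\inl(0)) = 0$. I would then split on the form of $h(0)$, which is legitimate constructively by the universal property of $A + \bbN$. If $h(0) = \inl(a)$ for some $a \in A$, then $A$ is inhabited and $p$ holds. If instead $h(0) = \inr(m)$ for some $m$, I claim $\neg p$: assuming $p$ would make $\inl(0)$ an element of $A + \bbN$, so by surjectivity of $h$ some $x$ satisfies $h(x) = \inl(0)$, forcing $x = 0$ and hence $h(0) = \inl(0)$, which contradicts $h(0) = \inr(m)$. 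Either way $p \lor \neg p$, and since $p$ was arbitrary, $\EM$ holds.

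I expect the only step requiring genuine insight to be the decisive observation: seeing that the extra matching clause of $\CBBB$, absent from plain $\CB$, is precisely what confines the preimage of the point $\inl(0)$, which exists only when $p$ holds, to the single index $0$, so that reading off $h(0)$ decides $p$. The remaining reasoning is the same subsingleton bookkeeping as in Proposition~\ref{prop:injsurxm}, and no properties of $h$ beyond bijectivity and the matching clause are used.
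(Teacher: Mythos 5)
Your proposal is correct and follows essentially the same route as the paper's proof: the same injections $f(n)=\inr(n)$, $g(\inl(0))=0$, $g(\inr(n))=n+1$, the same case split on $h(0)$, and the same use of surjectivity plus the $\CBBB$ matching clause to pin the preimage of $\inl(0)$ to $0$ and derive the contradiction in the $\neg p$ branch. The only difference is presentational: you isolate the ``only preimage is $0$'' observation up front, whereas the paper unfolds it inside the second case.
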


\begin{proof}
  Assume ${\CBBB}$ holds.
  Let a proposition $p\in \Omega$ be given, seen as a subset $A = \{ 0 \mathrel{|} p \} \subseteq 1$.
  Take $f:\bbN\to A+\bbN$ and $g:A+\bbN\to \bbN$ to be the injections described above.

  By ${\CBBB}$ there is a bijection $h:\bbN \to A+\bbN$ such that
  $f(x) = y$ or $g(y) = x$
  whenever $h(x) = y$ for $x\in \bbN$ and $y\in A+\bbN$.
  We know either $h(0) = \inl(0)$ or $h(0) = \inr(n)$ for some $n\in\bbN$.
  \begin{itemize}
  \item
  If $h(0) = \inl(0)$, then $0\in A$ and so $p$ holds.
  \item
  Suppose $h(0) = \inr(n)$ for some $n\in\bbN$.
  We will prove $\neg p$ holds. To this end, assume $p$ holds, so that $0\in A$.
  Since $h$ is surjective, there is some $m\in \bbN$ such that $h(m) = \inl(0)$.
  Either $f(m) = \inl(0)$ or $g(\inl(0)) = m$.
  The first case is impossible since $f(m) = \inr(m)$ by the definition of $f$ and $\inl(0)\not=\inr(m)$.
  Therefore, $g(\inl(0)) = m$ and, by definition of $g$, we must have $m=0$.
  This is also impossible since it implies $\inl(0)=h(m)=h(0)=\inr(n)$.
  \end{itemize}
\end{proof}

\section{Reversing Cantor-Bernstein}
\label{sec:cbxm}

Let us pause a moment and consider why we failed to prove the analogue of Proposition~\ref{prop:injsurxm}.
In that proof of that proposition lemma, we did not use any information about the surjection $g$.
Instead, we resorted to exhaustively enumerating the set $2$ to check whether we had some $x \in 2$
such that $g(x) = \inr(0)$, which is a decidable property.
This feature of $2$ of being searchable may be formalized using the
notion of \emph{omniscience}.

\begin{definition}[Omniscient sets]\label{def:omn}
We say a set $O$ is \emph{omniscient} if for every $p \in 2^O$ if either there exists $x \in O$ such that $p(x) = 0$, or $p$ is constantly equal to $1$. That is,
$$\forall p \in {2^O}. (\exists x \in O. p(x) = 0) \vee (\forall x \in O. p(x) = 1)$$
\end{definition}

In classical logic, all sets are clearly omniscient, but this is not necessarily true in constructive logics.
However, all finite sets, and in particular $2$, are omniscient.
This concept allows us to isolate the actual core of the proof of Proposition~\ref{prop:injsurxm}.

\begin{lemma}
\label{lem:surinh}
Suppose that we have an omniscient set $O$ and some sets $A$ and $B$. If there exists a surjection $f : O \to A + B$, then either $A$ is inhabited or it is empty.
\end{lemma}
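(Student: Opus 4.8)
The plan is to reduce the question to a single application of the omniscience of $O$. The key observation is that the coproduct $A + B$ carries a canonical decidable test separating its two summands: for every $z \in A + B$ we have either $z = \inl(a)$ for some $a \in A$ or $z = \inr(b)$ for some $b \in B$, and these two alternatives are mutually exclusive. Composing this test with the surjection $f$, I would define a map $p \in 2^O$ by setting $p(x) \eqdef 0$ when $f(x) = \inl(a)$ for some $a \in A$, and $p(x) \eqdef 1$ when $f(x) = \inr(b)$ for some $b \in B$. This is a genuine total function $O \to 2$ precisely because the universal property of disjoint unions makes the case distinction exhaustive and exclusive.

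Next I would invoke omniscience of $O$ on this $p$, yielding two cases. In the first case there is some $x \in O$ with $p(x) = 0$; unfolding the definition of $p$, this produces an $a \in A$ with $f(x) = \inl(a)$, so $A$ is inhabited. In the second case $p(x) = 1$ for all $x \in O$, meaning every value $f(x)$ lies in the right summand. I claim $A$ is then empty: were there some $a \in A$, surjectivity of $f$ would give $x \in O$ with $f(x) = \inl(a)$, contradicting $p(x) = 1$ since $\inl(a) = \inr(b)$ is impossible. Hence no element of $A$ exists. Assembling the two branches delivers exactly the desired disjunction.

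The only point requiring care — and the sole place where constructivity is at issue — is the very first step: one must check that the defining case split for $p$ is genuinely decidable, so that $p$ is an honest element of $2^O$ rather than merely a relation. Once $p$ is in hand, omniscience does all the work, and surjectivity of $f$ is used only to rule out inhabitants of $A$ in the second branch. I expect this decidability bookkeeping for $p$ to be the main (and essentially only) obstacle.
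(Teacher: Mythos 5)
Your proposal is correct and matches the paper's proof essentially verbatim: both define the same indicator $p\in 2^O$ via the exhaustive and exclusive coproduct case split, apply omniscience, and use surjectivity of $f$ only to refute an inhabitant of $A$ in the all-ones branch.
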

\begin{proof}
  Let $f:O\to A+B$ be given and define $p\in 2^O$ by
  $$
  p(x) = \left\{
  \begin{array}{lr}
    0 & {\mbox{ if }} \exists a\in A. \; f(x) = \inl(a) \\
    1 & {\mbox{ if }} \exists b\in B. \; f(x) = \inr(b)
  \end{array}
  \right.
  $$
  Since $O$ is omniscient either $\exists x. \; P(x) = 0$ or $\forall x\in O. \; P(x) = 1$.
  If $\exists x\in O. \; P(x) = 0$, then $A$ is clearly inhabited.
  Suppose $P(x) = 1$ for every $x\in O$.
  We show that $A$ is empty. Suppose that we have $a\in A$ toward a contradiction. Since $f$ is surjective, there is some $x\in O$ such that $f(x) =\inl(a)$ and thus $P(x) = 0$.
This contradicts $P(x) = 1$.
\end{proof}

From ${\CB}$ (instead of the stronger ${\CBBB}$)
we could use the injections from the proof of Theorem~\ref{thm:cbbbxm}
to obtain a surjection $\bbN \to A + \bbN$.
If $\bbN$ were omniscient, then we could use this surjection with Lemma~\ref{lem:surinh}
to $\EM$.
However, omniscience of $\bbN$
correspond to the axiom of \emph{limited principle of omniscience} ($\LPO$)
a well-known constructive taboo~\cite{Bishop1967},
which can be thought of as a (strictly weaker) version of $\EM$\footnote{
Remark that, at this point, we have $\LPO \wedge \CB ~ \Rightarrow ~ \EM$ over constructive set theory. This observation is however not necessary to carry out the subsequent argument.}.
This means that deriving $\EM$ from the existence of bijections $\bbN \cong 1 + \bbN$ by way of Lemma~\ref{lem:surinh} is unreasonable.
Luckily for us, Escard\'{o} proved that there exists an infinite subset of the Cantor space, $\Ninfty$, which is omniscient~\cite{Esc13} and can be used to prove
$\CB \Rightarrow \EM$.

In order to keep the argument self-contained, we reproduce his argument below before deriving the main result.

\begin{definition}
\label{def:Ninfty}
  We define 
  $\Ninfty$ to be the set of non-increasing
  sequences in $2^\bbN$, i.e.\footnote{For more categorically-inclined people, $\Ninfty$ is the final coalgebra for the functor $X \mapsto 1 + X$. This would justify calling $\Ninfty$ \emph{the} set of conatural numbers. On the other hand, the induced topology from $2^\bbN$ in Definition~\ref{def:Ninfty} also justifies calling $\Ninfty$ the \emph{one-point compactification of $\bbN$} (seen as a discrete space).},
  $$\Ninfty = \big\{p\in 2^\bbN \mathrel{\big|} \forall n\in\bbN. ~ \big( p(n) = 1 \; \Rightarrow \;  \forall m\in\bbN. ~ (m < n \; \limplies \; p(m) = 1) \big) \big\}.$$
  Let $\omega\in \Ninfty$ denote the constant function $n \mapsto 1$.
  Define an injection taking $n\in\bbN$ to $\underline{n}\in\Ninfty$ by
  $$
  \underline{n}(m) =
  \left\{
  \begin{array}{lr}
    1 & {\mbox{ if }} m < n \\
    0 & {\mbox{ otherwise.}}
  \end{array}
  \right.
  $$
  Finally, define $\Ninftys:\Ninfty\to\Ninfty$
  by setting 
  $\Ninftys(p)(0) = 1$
  and
  $\Ninftys(p)(n+1) = p(n)$.
\end{definition}

The set $\Ninfty$ is infinite 
as witnessed by $\underline{0}$ and $\Ninftys$.
\begin{lemma}\label{lem:Ninftys}
  The function $\Ninftys:\Ninfty\to\Ninfty$ is injective
  and $\Ninftys(p)\not= \underline{0}$ for all $p\in\Ninfty$.
\end{lemma}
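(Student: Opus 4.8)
The plan is to verify both claims directly from the recursive definition of $\Ninftys$, since each follows by evaluating the sequence $\Ninftys(p)$ at a single well-chosen index. No iteration or non-increasing-ness argument is needed, because well-definedness of $\Ninftys$ as a map into $\Ninfty$ is already granted by the statement.

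For injectivity, I would take $p, q \in \Ninfty$ with $\Ninftys(p) = \Ninftys(q)$ and show $p = q$. Two elements of $\Ninfty$ are equal precisely when they agree as sequences, i.e.\ at every index of $\bbN$. So fix $n \in \bbN$; by the defining clause $\Ninftys(r)(n+1) = r(n)$ applied to both $p$ and $q$ we get
$$p(n) = \Ninftys(p)(n+1) = \Ninftys(q)(n+1) = q(n).$$
Since $n$ was arbitrary, $p$ and $q$ agree pointwise, hence $p = q$. The only background fact used is that functions agreeing on every argument are equal, which holds in our set-theoretic setting and requires no excluded middle.

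For the separation claim, I would simply evaluate at the index $0$. By definition $\Ninftys(p)(0) = 1$ for every $p \in \Ninfty$, whereas $\underline{0}(0) = 0$ because the condition $0 < 0$ defining $\underline{0}$ is false. Thus $\Ninftys(p)$ and $\underline{0}$ already differ at the index $0$, so $\Ninftys(p) \neq \underline{0}$.

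There is essentially no obstacle here: both assertions reduce to reading off a single value of the shifted sequence. The only point worth a word of caution is that the two parts are witnessed by complementary portions of the definition of $\Ninftys$ — injectivity uses the successor clause at indices $n+1$, where the prepended value at $0$ is irrelevant, while the inequality with $\underline{0}$ uses precisely that prepended value $\Ninftys(p)(0) = 1$ at $0$.
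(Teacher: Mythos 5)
Your proposal is correct and follows essentially the same route as the paper's proof: injectivity via $p(n) = \Ninftys(p)(n+1) = \Ninftys(q)(n+1) = q(n)$ and the inequality via $\Ninftys(p)(0) = 1 \neq 0 = \underline{0}(0)$. Nothing is missing.
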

\begin{proof}
  Suppose $\Ninftys(p) = \Ninftys(q)$.
  Since $p(n) = \Ninftys(p)(n+1) = \Ninftys(q)(n+1) = q(n)$
  for every $n\in\bbN$
  we know $p = q$, as desired.
  The fact that $\Ninftys(p)\not= \underline{0}$ for all $x\in\Ninfty$
  follows from $\Ninftys(p)(0) = 1 \not= 0 = \underline{0}(0)$.
\end{proof}

Classically every element of $\Ninfty$ is either $\omega$ or of the form $\underline{n}$.
The corresponding disjunction is equivalent to $\LPO$, and so is unprovable constructively.
However, for decidable predicates, it is sufficient to show that they hold over all elements
$\underline{n}$ and $\omega$ to show they hold everywhere\footnote{This constitutes a particular case of Lemma 3.4 in~\cite{Esc13}.}.
\begin{lemma}\label{lem:Ninftydensity}
  Let $Q\in 2^\Ninfty$ be given.
  If $Q(\omega) = 1$ and $\forall n\in\bbN. ~ Q(\underline{n}) = 1$, then
  $\forall p\in \Ninfty. ~ Q(p) = 1$.
\end{lemma}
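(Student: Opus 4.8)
The plan is to fix an arbitrary $u \in \Ninfty$ and prove $Q(u) = 1$ directly. Since $Q(u) \in 2$, it suffices to rule out the possibility $Q(u) = 0$: once a contradiction is derived from $Q(u) = 0$, the decidability of the value $Q(u)$ yields $Q(u) = 1$. The whole strategy is to show that the assumption $Q(u) = 0$ forces $u$ to be the constant sequence $\omega$, at which point $Q(u) = Q(\omega) = 1$ contradicts $Q(u) = 0$.

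So, working under the standing assumption $Q(u) = 0$, I would prove by induction on $k \in \bbN$ that $u(k) = 1$. In the inductive step, with induction hypothesis $u(j) = 1$ for all $j < k$, I would perform a case analysis on the value $u(k) \in 2$. If $u(k) = 0$, then, since $u$ is non-increasing, every later coordinate vanishes as well, so $u(m) = 0$ for all $m \geq k$; combined with the induction hypothesis this means that $u$ agrees coordinatewise with $\underline{k}$, i.e.\ $u = \underline{k}$. But then $Q(u) = Q(\underline{k}) = 1$ by hypothesis, contradicting $Q(u) = 0$. Hence $u(k) = 1$, closing the induction.

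Having established $u(k) = 1$ for every $k \in \bbN$, the sequence $u$ is the constant-$1$ function, that is $u = \omega$. Therefore $Q(u) = Q(\omega) = 1$, the desired contradiction. As $u$ was arbitrary, this proves $\forall p \in \Ninfty.\ Q(p) = 1$. Note that only the hypotheses $Q(\omega) = 1$ and the single instance $Q(\underline{k}) = 1$ (at the index under examination) are used.

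The point I would highlight is that this argument is genuinely constructive and does \emph{not} secretly invoke $\LPO$: at no stage do we decide the disjunction ``$u = \omega$ or $u = \underline{n}$ for some $n$'', which is precisely the omniscience taboo. Instead each coordinate $u(k)$ is resolved by a local, decidable case split, and it is the non-increasing constraint defining $\Ninfty$ that lets a single coordinate being $0$ collapse $u$ to a concrete $\underline{k}$ living in the dense set where $Q$ is known to be $1$. The only subtlety to get right is the bookkeeping of the induction, so that the contradiction at each stage is drawn from the fixed hypothesis $Q(u) = 0$ rather than from any attempt at an unbounded search for a coordinate where $u$ drops to $0$.
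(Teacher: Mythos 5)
Your proof is correct and follows essentially the same route as the paper's: assume $Q(u)=0$, show by strong induction (using decidability of each coordinate and the non-increasing condition to collapse $u$ to $\underline{k}$ whenever a coordinate is $0$) that $u=\omega$, and derive the contradiction from $Q(\omega)=1$. The only difference is presentational — you make explicit the local case split on $u(k)\in 2$ that the paper leaves implicit.
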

\begin{proof}
  Let $Q\in 2^\Ninfty$ such that
  $Q(\omega) = 1$ and $\forall n\in\bbN. ~ Q(\underline{n}) = 1$.
  Let $p\in \Ninfty$ be given.
  To prove $Q(p) = 1$, it is enough to prove $Q(p)\not=0$.
  Assume $Q(p) = 0$.
  Under this assumption we can prove $\forall n\in\bbN. ~p(n) = 1$
  by strong induction.
  Assume $\forall k\in\bbN. ~ (k < n \limplies p(k) = 1)$ and
  $p(n)=0$. This is enough information to infer $p=\underline{n}$,
  contradicting $Q(p) = 0$ and $Q(\underline{n}) = 1$.
  To end the proof we note that $p$ must be $\omega$
  (since $\forall n\in\bbN.~ p(n) = 1$), contradicting $Q(p) = 0$ and $Q(\omega) = 1$.
\end{proof}

While the desired selection function $\varepsilon : 2^{\Ninfty} \to\Ninfty$
is rather easy to define effectively, Lemma~\ref{lem:Ninftydensity} is critical in allowing
to prove constructively that it is indeed a selection function.

\begin{theorem}[{\hspace{-0.05em}\cite[Theorem 3.15]{Esc13}}]\label{thm:Ninftysel}
  There is a function $\varepsilon : 2^{\Ninfty} \to\Ninfty$
  such that for every $Q\in 2^{\Ninfty}$,
  if $Q(\varepsilon(Q)) = 1$, then
  $\forall p\in \Ninfty. ~ Q(p) = 1$.
\end{theorem}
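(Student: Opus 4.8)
The plan is to define $\varepsilon$ explicitly as a ``search for the first counterexample among the $\underline{n}$'' and then let Lemma~\ref{lem:Ninftydensity} convert a success of $Q$ at this worst case into a success everywhere. Concretely, I would set $\varepsilon(Q)(n) = 1$ if and only if $Q(\underline{k}) = 1$ holds for every $k \le n$, and $\varepsilon(Q)(n) = 0$ otherwise. Each such condition is a finite conjunction of decidable equalities in $2$, so $\varepsilon(Q) : \bbN \to 2$ is a genuine function, and it is non-increasing by construction (enlarging the range of the conjunction can only turn a $1$ into a $0$); hence $\varepsilon(Q) \in \Ninfty$ and $\varepsilon : 2^{\Ninfty} \to \Ninfty$ is well defined. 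The intended behaviour is that $\varepsilon(Q) = \underline{m}$ exactly when $m$ is the first index with $Q(\underline{m}) = 0$, and $\varepsilon(Q) = \omega$ when $Q(\underline{n}) = 1$ for all $n$.

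Assuming $Q(\varepsilon(Q)) = 1$, the goal $\forall p \in \Ninfty.\, Q(p) = 1$ reduces, by Lemma~\ref{lem:Ninftydensity}, to establishing the two hypotheses $\forall n \in \bbN.\, Q(\underline{n}) = 1$ and $Q(\omega) = 1$. I would prove the first of these by fixing $n$ and performing a case analysis on the decidable value $\varepsilon(Q)(n) \in 2$. If $\varepsilon(Q)(n) = 1$, then by definition $Q(\underline{k}) = 1$ for all $k \le n$, and in particular $Q(\underline{n}) = 1$. If instead $\varepsilon(Q)(n) = 0$, then $Q(\underline{k}) = 0$ for some $k \le n$; since this ranges over a finite set, a bounded search produces the least such $m \le n$. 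For $j < m$ every $Q(\underline{k})$ with $k \le j$ equals $1$, so $\varepsilon(Q)(j) = 1$, whereas $\varepsilon(Q)(m) = 0$ and, by monotonicity, $\varepsilon(Q)(j) = 0$ for $j \ge m$; that is, $\varepsilon(Q) = \underline{m}$. But then $Q(\varepsilon(Q)) = Q(\underline{m}) = 0$, contradicting the standing assumption $Q(\varepsilon(Q)) = 1$. Hence the case $\varepsilon(Q)(n) = 0$ cannot occur, and $Q(\underline{n}) = 1$.

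Once $\forall n.\, Q(\underline{n}) = 1$ is known, the second hypothesis is immediate: every conjunction defining $\varepsilon(Q)(n)$ is then satisfied, so $\varepsilon(Q)(n) = 1$ for all $n$, i.e.\ $\varepsilon(Q) = \omega$, whence $Q(\omega) = Q(\varepsilon(Q)) = 1$. Lemma~\ref{lem:Ninftydensity} then yields $\forall p \in \Ninfty.\, Q(p) = 1$, completing the argument.

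I expect the only delicate point to be the middle paragraph, where constructivity must be preserved: the temptation is to split on whether a counterexample among the $\underline{n}$ exists, which is exactly $\LPO$ and therefore forbidden. The key observation that keeps the proof constructive is that, for each fixed $n$, the single bit $\varepsilon(Q)(n)$ is decidable and that the problematic branch is refuted outright by the hypothesis, so no global search over $\bbN$ is ever required. The density Lemma~\ref{lem:Ninftydensity} is what absorbs the genuinely non-decidable passage from the dense subset of the $\underline{n}$ together with $\omega$ to all of $\Ninfty$.
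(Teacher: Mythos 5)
Your proposal is correct and follows essentially the same route as the paper: the same definition of $\varepsilon(Q)$ as the decidable prefix-search over the $\underline{n}$, the same key observation that a first counterexample at $m$ would force $\varepsilon(Q)=\underline{m}$ and contradict $Q(\varepsilon(Q))=1$, and the same conclusion via $\varepsilon(Q)=\omega$ and Lemma~\ref{lem:Ninftydensity}. The only cosmetic difference is that you extract $Q(\underline{n})=1$ by case analysis on the decidable bit $\varepsilon(Q)(n)$ plus a bounded search, where the paper phrases the same refutation as a strong induction.
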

\begin{proof}
  For $Q\in 2^{\Ninfty}$, take $\varepsilon(Q)\in 2^{\bbN}$ to be
  $$\varepsilon(Q)(n) =
  \left\{
    \begin{array}{lr}
      1 & {\mbox{ if }} Q(\underline{k}) {\mbox{ for each }} k \le n \\
      0 & {\mbox{ otherwise.}}
      \end{array}
    \right.
    $$
    This is well-defined by recursion over $n$.
    It is easy to check that $\varepsilon(Q)\in\Ninfty$ as well.

    Assume $Q(\varepsilon(Q))=1$ and let $p\in \Ninfty$ be given.
    If $\forall k < n. \; Q(\underline{k})=1$ and $Q(\underline{n})=0$,
    then $\varepsilon(Q) = \underline{n}$ and so
    $Q(\underline{n}) = Q(\varepsilon(Q)) = 1$, contradicting $Q(\underline{n})=0$.
    Consequently, an induction proves that $Q(\underline{n})=1$ for every $n \in \bbN$.
    Thus $\varepsilon(Q) = \omega$
    and so $Q(\omega) = Q(\varepsilon(Q)) = 1$ holds as well.
    Hence $Q(p)=1$ for every $p\in \Ninfty$
    by Lemma~\ref{lem:Ninftydensity}.
\end{proof}

\begin{corollary}[{\hspace{-0.05em}\cite[Corollary 3.6]{Esc13}}]\label{lem:Ninftyomn}
  The set $\Ninfty$ is omniscient.
\end{corollary}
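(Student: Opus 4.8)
The plan is to apply Theorem~\ref{thm:Ninftysel} essentially verbatim. Given an arbitrary $p \in 2^{\Ninfty}$, I would first form the element $\varepsilon(p) \in \Ninfty$ produced by the selection function, and then inspect the value $p(\varepsilon(p)) \in 2$. The crucial—and essentially only—observation is that $2$ admits a case split: since $2 \cong 1 + 1$, every element is either $0$ or $1$ by the disjointness property of disjoint unions recalled in the preliminaries. Applied to $p(\varepsilon(p))$, this yields the two cases that drive the whole argument.

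In the first case, $p(\varepsilon(p)) = 0$, and then $\varepsilon(p)$ is itself a witness for $\exists x \in \Ninfty.\; p(x) = 0$, which is the left disjunct in Definition~\ref{def:omn}. In the second case, $p(\varepsilon(p)) = 1$, and here Theorem~\ref{thm:Ninftysel} applies directly to give $\forall q \in \Ninfty.\; p(q) = 1$, the right disjunct. Thus in either case the required disjunction holds, establishing omniscience.

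I do not expect any genuine obstacle: the entire technical weight has already been discharged in the construction of $\varepsilon$ and the verification of its selection property, which in turn rested on the density Lemma~\ref{lem:Ninftydensity}. The only point worth stating explicitly is that the case split is legitimate constructively \emph{precisely because} it is performed on a value in $2$ rather than on an arbitrary proposition in $\Omega$. This is exactly the feature that makes $\Ninfty$ behave like a searchable set, even though the analogous disjunction for $\bbN$ would amount to the taboo $\LPO$.
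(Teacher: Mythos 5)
Your argument is exactly the paper's: evaluate $p$ at $\varepsilon(p)$, case-split on the resulting value in $2$, take $\varepsilon(p)$ as the witness in the $0$ case, and invoke Theorem~\ref{thm:Ninftysel} in the $1$ case. The proposal is correct and matches the paper's proof, with your added remark about why the case split is constructively legitimate being a sound (if optional) clarification.
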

\begin{proof}
  Let $Q\in 2^{\Ninfty}$ be given.
  If $Q(\varepsilon(Q))=0$, then $\exists x\in \Ninfty.Q(x)=0$.
  If $Q(\varepsilon(Q))=1$, then
  $\forall x\in\Ninfty. \; Q(x)=1$
  by Theorem~\ref{thm:Ninftysel}.
\end{proof}

This completes the part of the construction we
obtained following Escard{\'{o}}~\cite{Esc13}.
We can now easily put it together with
Lemma~\ref{lem:surinh} to conclude.

\begin{proof}[Proof of Theorem~\ref{thm:maincbxm}]
  Assume ${\CB}$ holds.
  We know $\Ninfty$ is omniscient by Lemma~\ref{lem:Ninftyomn}.
  We know $\Ninftys$ is injective and
  $\Ninftys(x)\not=\underline{0}$ for every $x\in\Ninfty$
  by Lemma~\ref{lem:Ninftys}.
  Let $p\in\Omega$ be a proposition and take $A=\{0 \mathrel{|} p\}\subseteq 1$.
  Analogously to Section~\ref{sec:cbbbem}, we consider the following functions:
  $$
\begin{array}{lcclcccl}
f:&\Ninfty &\longrightarrow& A + \Ninfty&
\phantom{aaaaaaa} g:
&A + \Ninfty &\longrightarrow& \Ninfty \\
& x &\mapsto& \inr(x)
& &
\inl(0) &\mapsto& \underline{0}
\\
& & &
& &
\inr(x) &\mapsto& \Ninftys(x)
\end{array}
$$
  Both $f$ and $g$ are clearly injective, so we can
  apply ${\CB}$ to obtain a bijection $h:\Ninfty\to A+\Ninfty$.
  Lemma~\ref{lem:surinh} now implies that either $A$ is inhabited (so $p$ holds) or $A$ is empty (so $\neg p$ holds).
\end{proof}  

\bibliography{refs}
\bibliographystyle{alphaurl}

\end{document}